\newtheorem{theorem}{Theorem}[subsection]
\newtheorem{lemma}[theorem]{Lemma}
\newtheorem{conj}[theorem]{Conjecture}
\newtheorem{question}[theorem]{Question}
\newtheorem{definition}[theorem]{Definition}
\newtheorem{corollary}[theorem]{Corollary}
\newtheorem{proposition}[theorem]{Proposition}
\newtheorem{remark}[theorem]{Remark}
\newtheorem{example}[theorem]{Example}
\newcommand{\cP}{\mathcal{P}}
\newcommand{\cM}{\mathcal{M}}
\newcommand{\ZZ}{\mathbb{Z}}
\newcommand{\HoH}{\mathrm{HH}}
\newcommand{\cC}{\mathcal{C}}
\newcommand{\too}{\longrightarrow}
\newcommand{\Db}{\mathrm{D}^{b}}
\newcommand{\TIP}{\mathrm{TIP}}
\renewcommand{\deg}{\mathbf{deg}}
\newif\ifmoditem
\newcommand{\setupmodenumerate}{%
  \global\moditemfalse
  \let\origmakelabel\makelabel
  \def\moditem##1{\global\moditemtrue\def\mesymbol{##1}\item}%
  \def\makelabel##1{%
    \origmakelabel{##1\ifmoditem\rlap{\mesymbol}\fi\enspace}%
    \global\moditemfalse}%
}
\title{A counterexample to DG version of Han's conjecture}
\author{Yeqin Liu}
\address[Y. Liu]{Department of Mathematics\\
University of Michigan\\
Ann Arbor, MI 48109\\
USA.}
\email{yqnl@umich.edu}
\author{Yu Shen}
 \address[Y. Shen]{Department of Mathematics, Michigan State University\\ East Lansing, MI 48824\\
 USA.}
\email{shenyu5@msu.edu}
\begin{document}

\begin{abstract}
In \cite{Han06}, Han proposed the following conjecture: let $B$ be a finite-dimensional $k$-algebra. If $\HoH_{n}(B)\neq 0$ for only finitely many $n\in \ZZ$, then $B$ is smooth. 
This conjecture can be generalized to the DG setting: let $B$ be a finite-dimensional DG $k$-algebra. If $\HoH_{n}(B)\neq 0$ for only finitely many $n\in \ZZ$, then $B$ is smooth. 
In this note, we show that the DG generalization of Han's conjecture is false.
\end{abstract}

    \maketitle

    \setcounter{tocdepth}{1}
\tableofcontents

\section{Introduction}

In \cite{Han06}, Han proposed the following conjecture:

\begin{conj}\label{Han's conjecture}
Let $B$ be a finite-dimensional algebra over an algebraically
closed field $k$. If the Hochschild homology groups $\operatorname{HH}_n(B)$ are nonzero for only finitely many $n\in \ZZ$, then
$B$ is (homologically) smooth over $k$.
\end{conj}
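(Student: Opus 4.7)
The plan is to attempt the conjecture by reducing to the basic connected case and then inducting on the Loewy length of $B$ (equivalently, on the nilpotency index of $J := \mathrm{rad}(B)$). I would first invoke Morita invariance: both $\HoH_{*}(B)$ and homological smoothness are Morita invariants, so one may assume $B = kQ/I$ is basic, given by a quiver $Q$ with admissible relations $I$. Since $B$ is homologically smooth precisely when $B$ is a perfect module over $B^{e} := B \otimes_{k} B^{\mathrm{op}}$, the goal is to promote the vanishing $\HoH_{n}(B) = \Tor^{B^{e}}_{n}(B,B) = 0$ for $n \gg 0$ into a bounded finitely generated projective $B^{e}$-resolution of $B$.

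For the inductive step, I would pick a primitive idempotent $e \in B$ such that the two-sided ideal $BeB$ is stratifying, producing a recollement
\[
\Db(B/BeB) \to \Db(B) \to \Db(eBe).
\]
Such a recollement yields a Mayer--Vietoris type long exact sequence in Hochschild homology (following Keller). The strategy is: show that bounded $\HoH_{*}(B)$ forces bounded $\HoH_{*}(eBe)$ and $\HoH_{*}(B/BeB)$; apply the inductive hypothesis (both factor algebras have strictly shorter Loewy length when the ideal is nontrivial and proper) to conclude that each of them is homologically smooth; then glue via a recollement-smoothness criterion of K\"onig--Nagase type to deduce smoothness of $B$ itself.

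The main obstacle is the base case of the induction, where no nontrivial idempotent reduction is available. The essential remaining scenario is that in which $B$ is basic, connected, and self-injective. Here the expectation is that $\HoH_{*}(B)$ should be eventually periodic whenever $B$ is non-semisimple, so that the boundedness hypothesis would collapse $B$ to a semisimple algebra (hence to a smooth one). Establishing this periodicity in full generality is, however, essentially equivalent to the Snashall--Solberg finite generation conjecture ($\mathrm{Fg}$) on Hochschild cohomology and support varieties, a problem which is itself open. A secondary technical difficulty is that passing bounded Hochschild homology through the recollement requires the idempotent ideal $BeB$ to be homologically transparent (for instance, of finite projective dimension over $B$ from both sides), which a generic basic algebra need not supply. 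A realistic end-state of this plan is therefore a conditional theorem -- reducing Han's conjecture to the connected self-injective case, modulo $\mathrm{Fg}$ -- rather than an unconditional proof.
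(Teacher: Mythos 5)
The statement you are asked about is labelled a \emph{conjecture} in the paper, and the paper neither proves it nor claims to: it explicitly notes that Han's conjecture remains open, and its actual contribution is a \emph{counterexample} to the DG generalization (Theorem~\ref{theorem-main}). So there is no ``paper proof'' to compare against, and your proposal---by its own candid admission---is a research program rather than a proof. The gaps you flag are real and fatal to an unconditional argument: the base case (basic, connected, self-injective, no useful idempotent) is exactly where the conjecture lives, and reducing it to eventual periodicity of $\HoH_{*}$ or to the Snashall--Solberg $\mathrm{Fg}$ condition trades one open problem for another. Beyond the gaps you acknowledge, there are further unjustified steps: a generic finite-dimensional algebra need not admit any primitive idempotent $e$ with $BeB$ stratifying (let alone homologically transparent on both sides); the long exact sequence gives you only one of the three $\HoH_{*}$ terms bounded, which does not force boundedness of the other two; and the Loewy length of $eBe$ satisfies only $\mathrm{LL}(eBe)\le \mathrm{LL}(B)$, not strict inequality, so the induction is not guaranteed to terminate along that invariant.

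It is worth noting that the paper's main theorem is itself evidence against the formal portion of your strategy. The gluing ingredients you rely on---Morita/derived invariance of $\HoH_{*}$, Keller's localization sequence, and smoothness criteria for (semi)orthogonal pieces---are all available verbatim for proper DG algebras, and Theorem~\ref{theorem-main} exhibits a finite-dimensional DG algebra $B$ with $\HoH_{*}(B)$ finite-dimensional that is \emph{not} smooth. Hence any successful attack on Conjecture~\ref{Han's conjecture} must exploit the ungraded, degree-zero structure of $B$ in an essentially non-formal way; the categorical recollement machinery alone cannot close the base case, precisely because it cannot distinguish the classical setting from the DG setting where the statement fails.
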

Originally, Han's conjecture was formulated with the conclusion that $B$ has finite global dimension. By \cite[Corollary 3.19]{RR22}, over a perfect field $k$, a finite-dimensional $k$-algebra has finite global dimension if and only if it is (homologically) smooth. 

For a DG algebra, global dimension is not well-defined. However, smoothness is defined as follows.
Recall that a DG $k$-algebra $B$ is \emph{smooth over $k$} if $B$ is a perfect left $B^e$-module, where
$
B^e = B \otimes_k B^{\mathrm{op}}
$
denotes the enveloping algebra of $B$. 
Using this definition, we generalize Han's conjecture to DG algebras.

\begin{question}\label{question DG}
    Let $B$ be a finite-dimensional DG algebra over an algebraically
    closed field $k$. If the Hochschild homology groups $\operatorname{HH}_n(B)$ are nonzero for only finitely many $n\in \ZZ$, is $B$ smooth over $k$?
\end{question}

Han's conjecture has attracted much interest, and several advances have been made \cite{buchweitz2005finite,Han06,solotar2013hochschild,cibils2022han}, etc. See the survey \cite{da2023survey} for further progress. However, in general the conjecture remains open. The main result of this paper answers its DG generalization (Question~\ref{question DG}) negatively.

\begin{theorem}[Theorem~\ref{theorem-main}]\label{theorem-main-intro}
    Let $k$ be any  field of $\mathrm{char}(k)$=0. There exists a finite-dimensional DG $k$-algebra $B$ such that $\HoH_{*}(B)$ is finite-dimensional, but $B$ is not smooth over $k$. In particular, there are only finitely many $n\in \ZZ$ such that $\HoH_{n}(B)\neq 0$.
\end{theorem}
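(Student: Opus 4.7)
The plan is to exhibit an explicit finite-dimensional DG $k$-algebra $B$ and verify both required properties by a direct homological computation. Because Hochschild homology and smoothness are both $A_\infty$-quasi-isomorphism invariants, the construction must exploit a genuinely DG feature: a finite-dimensional non-smooth ordinary algebra $A$ placed in degree $0$ with trivial differential has $\HoH_*(A)$ equal to its classical Hochschild homology, and in familiar examples (e.g.\ $A = k[\epsilon]/\epsilon^2$) this is already infinite-dimensional. So the example must use a nontrivial grading and/or a nontrivial differential to cut $\HoH_*(B)$ down to finite total dimension without thereby making $B$ perfect over $B^e$.

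For the construction, I would search among small graded quiver DG algebras $B = kQ/I$ equipped with an internal differential $d$, or among DG trivial extensions $B = A \oplus M[n]$ of a non-smooth finite-dimensional algebra $A$ by a carefully chosen $A$-bimodule $M$ placed in a cohomological degree $n$. The design principle is to arrange the grading and the differential so that the bar (or a Koszul-type) resolution of $B$ over $B^e$ admits a filtration whose associated spectral sequence converges on a finite page onto a finite-dimensional Hochschild complex, while at the same time $B$ manifestly fails to be a perfect $B^e$-module through the survival of an infinite Ext tower against some small $B^e$-module.

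Finite-dimensionality of $\HoH_*(B)$ would then be verified via this resolution, or via a Hochschild--Serre-type spectral sequence if $B$ contains a useful sub-DG algebra. Since $B$ is finite-dimensional, each simplicial level of the normalized bar complex is finite-dimensional, and the task reduces to showing that the internal differential of $B$ forces enough cancellation in the total complex to leave only finitely many surviving classes. Non-smoothness would be established by exhibiting a single $B^e$-module, typically of the form $S \otimes_k S^{\mathrm{op}}$ for a simple $B$-module $S$, for which $\Ext^{*}_{B^e}(B, -)$ is unbounded in cohomological degree, contradicting the perfectness of $B$ as a $B^e$-module.

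The main obstacle is the construction itself. Finite-dimensionality of $\HoH_*$ is a stringent global constraint — it holds automatically for smooth and proper DG algebras, while Conjecture~\ref{Han's conjecture} asserts that it should \emph{fail} for every classical non-smooth finite-dimensional algebra — whereas non-smoothness of $B$ requires a persistent infinite Ext tower over $B^e$. Reconciling these forces one into a non-formal DG regime where the internal differential kills the cyclic contributions to $\HoH_*$ precisely, while leaving the bimodule obstructions to smoothness intact; such a mechanism has no analogue in the ungraded setting. Producing a small explicit $B$ of this kind is the creative content of the paper; once it is pinned down, the verifications of finiteness of $\HoH_*$ and of non-smoothness should reduce to direct computations with the chosen resolution.
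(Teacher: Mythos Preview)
Your proposal is not a proof but a search strategy: you describe what properties the sought-after $B$ should have and how you would verify them once found, but you never produce a candidate. You yourself identify this as ``the main obstacle'' and ``the creative content of the paper,'' and indeed it is. Without an actual $B$ (or at least a mechanism that forces one to exist), nothing has been proved. The design principles you list --- a nontrivial internal differential on a quiver algebra that kills cyclic contributions while leaving an infinite $\Ext$ tower over $B^e$ --- are reasonable heuristics, but there is no indication of how to realize them, and the tension you correctly diagnose between finite $\HoH_*$ and non-smoothness is exactly what makes a direct search unlikely to succeed without a guiding idea.

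The paper's argument bypasses direct construction entirely. The key observation is a kind of duality: smoothness and properness can be exchanged through a localization sequence. One starts not with a proper non-smooth algebra but with a \emph{smooth non-proper} one having finite Hochschild homology --- the Weyl algebra $A_1$ in characteristic $0$, for which $\HoH_*(A_1)$ is one-dimensional. The technical work is to show $A_1$ is quasi-isomorphic to a finite-cell DG algebra, which by Efimov's theorem admits a smooth categorical compactification $\mathcal{P}\mathrm{erf}(A_1)\simeq \cC/\langle S\rangle$ with $\cC$ generated by a full exceptional collection. The desired $B$ is then a finite-dimensional model for $\End(S)$. Non-smoothness of $B$ is deduced categorically: were $B$ smooth, $\langle S\rangle$ would be admissible in $\cC$, forcing $\mathcal{P}\mathrm{erf}(A_1)$ to be proper, which it is not. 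Finiteness of $\HoH_*(B)$ falls out of the localization long exact sequence, since both $\HoH_*(\cC)$ and $\HoH_*(A_1)$ are finite-dimensional. So $B$ is never written down explicitly; its existence and properties are forced by the structure of the compactification.
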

In \cite{he2023hodge}, the author shows that for a projective nodal or cuspidal curve $C$ (over $\mathbb{C}$), we have
\[
\operatorname{HH}_{n}(C) \neq 0 \quad \text{for all } n \ge -1.
\]
In particular, $\operatorname{HH}_{n}(C)\neq 0$ for arbitrarily large $n$, so these are not examples for Theorem~\ref{theorem-main-intro}. In fact, we do not currently have a geometric example giving a negative answer to Question~\ref{question DG}. It would be very interesting to answer even the following weaker question.

\begin{question}
    Is there a proper singular variety $X$ such that
    \[
        \HoH_{n}(X):=\operatorname{HH}_{n}(\mathcal{P}\mathrm{erf}(X))\neq 0
    \]
    for only finitely many $n\in \ZZ$?
\end{question}

\subsection{Main idea}
For a DG algebra, smoothness and properness are two different finiteness conditions, which can sometimes be “converted’’ into each other. Our idea is to note that the Weyl algebra $A_{1}$ is smooth but not proper, and it has finite Hochschild homology over a field of characteristic $0$. By choosing a smooth categorical compactification, the kernel algebra $B$ becomes a proper but not smooth DG algebra with finite Hochschild homology. 

In general, by \cite{efimov2020categorical} a smooth categorical compactification does not always exist. In this paper, we construct such a compactification for the Weyl algebra $A_{1}$ by showing that it is quasi-isomorphic to a finite-cell DG algebra (Definition~\ref{definition-finite-cell}).
Moreover, under these conditions, it is well known that $B$ can be chosen to be a finite-dimensional DG algebra (see, e.g., \cite{orlov2020finite}).

\subsection{Acknowledgment}
We especially thank Alexander Efimov for many valuable comments on an early draft. Specifically, he points out another geometric construction of smooth categorical compactification of the Weyl algebra $A_{1}$ using $\mathscr{D}$-modules. See Subsection \ref{subsection-geometric-construction}.

We also thank Yunfan He and Alexander Perry for many useful discussions, and we thank Petter Andreas Bergh for helpful comments.

\subsection{Conventions}
Throughout this paper, we work over a  field $k$ of $\operatorname{char}(k)$=0. All DG algebras $A=(A^{\operatorname{gr}},d)$ are DG $k$-algebras, and we use cohomological grading: the differential $d$ has degree $+1$. For a homogeneous element $a\in A$, we write $|a|$ for its degree.

\section{Differential graded algebra}\label{section-DG}

\subsection{DG algebra}

 We assume the reader is familiar with the basic definitions and properties of DG algebras and DG categories; see \cite{keller2006differential,  orlov2020finite,stacks-09JD}  for details.
In this subsection we recall some preliminaries on finite-cell DG algebras.

\begin{definition}\label{definition-finite-cell}
Let $k$ be a field. A \emph{finite-cell} DG algebra over $k$ is a DG algebra $A$ such that
\begin{itemize}
    \item The underlying graded algebra is a finitely generated free graded algebra
$$A^{\mathrm{gr}} \cong k\langle x_1,\cdots,x_n\rangle. $$

\item The differentials satisfy
$
d x_i \in k\langle x_1,\cdots,x_{i-1}\rangle \quad \text{for } i=1,\cdots,n.
$
\end{itemize}
\end{definition}

We also recall the definition of smoothness and properness of DG algebras.

\begin{definition}
A DG \(k\)-algebra \(A\) is \emph{smooth} if \(A\) is perfect as a left DG \(A^{e}\)-module. \(A\) is \emph{proper} if its cohomology
\(H^{*}(A)=\bigoplus_{q} H^{q}(A)\) is finite-dimensional over \(k\).
\end{definition}

It turns out that finite-cell DG algebras have the following nice properties.

\begin{proposition}[{\cite[Remark 2.7]{Efimov2019HomotopyFiniteness}}]\label{proposition-smooth-for-fc}
Finite-cell DG algebras are smooth.
\end{proposition}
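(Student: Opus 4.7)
The plan is to construct a finite semi-free resolution of $A$ as a DG $A^e$-module of length $2$, which directly exhibits $A$ as perfect and hence smooth. The construction is the DG analogue of the classical bimodule resolution of a tensor algebra by its module of Kähler differentials.

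Let $V := \bigoplus_{i=1}^n k\cdot x_i$ be the finite-dimensional graded subspace of $A$ spanned by the cells; then $\Omega := A \otimes V \otimes A$ is a free $A^e$-module of rank $n$. At the level of the underlying graded algebra $A^{\mathrm{gr}} = k\langle x_1,\dots,x_n\rangle$, there is the classical short exact sequence of graded $A^e$-modules
\begin{equation*}
0 \to \Omega \xrightarrow{\iota} A \otimes A \xrightarrow{\mu} A \to 0,
\end{equation*}
with $\iota(1 \otimes x_i \otimes 1) := x_i \otimes 1 - 1 \otimes x_i$ (extended $A^e$-linearly) and $\mu$ the multiplication; this is a finite free bimodule resolution of $A^{\mathrm{gr}}$. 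The plan is to lift it to a DG resolution by equipping $\Omega$ with a twisted differential $D$ that makes $\iota$ into a map of DG bimodules.

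To do this, let $\tau : A \to \Omega$ be the unique degree-$0$ bimodule derivation with $\tau(x_i) = 1 \otimes x_i \otimes 1$. Both $\iota \tau$ and the map $a \mapsto a \otimes 1 - 1 \otimes a$ are degree-$0$ bimodule derivations $A \to A \otimes A$ agreeing on the generators, hence coincide on all of $A$. Define $D$ on $\Omega$ as the unique degree-$1$ $k$-linear map obeying the bimodule Leibniz rules and satisfying
\begin{equation*}
D(1 \otimes x_i \otimes 1) := \tau(d x_i);
\end{equation*}
the finite-cell condition $d x_i \in k\langle x_1,\dots,x_{i-1}\rangle$ makes this well-posed inductively in $i$.

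What remains is to verify (a) $D^2 = 0$ on $\Omega$ and (b) $d\iota = \iota D$; once these hold, the displayed sequence is exact as DG $A^e$-modules, so $A$ is quasi-isomorphic to the length-$2$ complex $[\Omega \xrightarrow{\iota} A \otimes A]$, which is semi-free of finite rank over $A^e$, proving smoothness. Part (b) is immediate from $\iota\tau(a) = a \otimes 1 - 1 \otimes a$. For (a), the key step is the identity $D\tau = \tau d$ as maps $A \to \Omega$: both sides agree on the generators $x_i$ by construction of $D$, and their difference $D\tau - \tau d$ turns out to be a degree-$1$ bimodule derivation (the ``extra'' Leibniz terms in $D\tau(ab)$ and $\tau d(ab)$ exactly cancel), so it vanishes on all of $A$. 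Given $D\tau = \tau d$, one gets $D^2(1 \otimes x_i \otimes 1) = D\tau(dx_i) = \tau(d^2 x_i) = 0$, which extends to all of $\Omega$ by the Leibniz rule for $D$. The main obstacle is the sign bookkeeping needed to show $D\tau - \tau d$ really is a bimodule derivation, but this ultimately reduces to the degree-$0$ derivation property of $\tau$ together with $d^2 = 0$ on $A$.
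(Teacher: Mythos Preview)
Your argument is correct and in fact supplies a complete proof where the paper merely cites \cite[Remark~2.7]{Efimov2019HomotopyFiniteness} without further detail. The route you take---realizing $A$ as the cokernel of the bimodule map $\Omega = A\otimes V\otimes A \to A\otimes A$ and lifting the classical tensor-algebra resolution to a short exact sequence of DG bimodules---is the standard one and is essentially what underlies the cited remark.

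One small correction about where the finite-cell hypothesis actually enters. You say that the condition $dx_i \in k\langle x_1,\dots,x_{i-1}\rangle$ is what makes the definition $D(1\otimes x_i\otimes 1) := \tau(dx_i)$ ``well-posed inductively in $i$''. In fact that formula makes sense for any DG algebra whose underlying graded algebra is free, and your verifications of $D^2=0$ and $d\iota = \iota D$ go through without any triangularity assumption: they use only $d^2=0$ on $A$ together with the derivation identities you wrote down. The place where the finite-cell condition is genuinely needed is your final clause, that the two-term complex $[\Omega \xrightarrow{\iota} A\otimes A]$ is \emph{semi-free}. For that you need $\Omega$ itself to be semi-free as a DG $A^e$-module, and this comes from the filtration $F_j\Omega := A\otimes V_j\otimes A$ with $V_j = \mathrm{span}_k(x_1,\dots,x_j)$: the triangularity $dx_i\in k\langle x_1,\dots,x_{i-1}\rangle$ forces $\tau(dx_i)\in F_{i-1}\Omega$, so each $F_j\Omega$ is a DG submodule with associated graded a shift of $A^e$. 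Without this step you only know that $A$ has a length-two resolution by \emph{graded}-free bimodules, which is not by itself a semi-free (hence perfect) resolution. So the proof is right; just relocate the appeal to the finite-cell hypothesis to this last step.
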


\begin{proposition}[{\cite[Theorem 6.2]{Efimov2023SmoothCompactifications}}]\label{fully faithful embedding}
If $A$ is a finite-cell DG algebra, then
$\mathcal{P}\mathrm{erf}(A)$ is Morita equivalent to the DG quotient $\mathcal{E}/\mathcal{S}$, where $\mathcal{E}$ is a pretriangulated proper DG category with a full exceptional collection, and $\mathcal{S}$ is a DG subcategory generated by a single object.
\end{proposition}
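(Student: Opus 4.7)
The plan is to prove the proposition by induction on the number $n$ of generators in the presentation $A^{\mathrm{gr}} \cong k\langle x_1, \ldots, x_n\rangle$, explicitly constructing at each stage a pretriangulated proper DG category $\mathcal{E}_n$ with a full exceptional collection, together with an object $S_n \in \mathcal{E}_n$ such that $\mathcal{E}_n/\langle S_n\rangle$ is Morita equivalent to $\mathcal{P}\mathrm{erf}(A_n)$, where $A_n := k\langle x_1, \ldots, x_n\rangle$. The base case $n=0$ is trivial: take $\mathcal{E}_0 = \mathcal{P}\mathrm{erf}(k)$, $S_0 = 0$, with exceptional collection $\{k\}$. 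The geometric prototype for the inductive step is the motivating example $A = k[x]$: here $\mathcal{P}\mathrm{erf}(k[x]) \simeq \mathcal{P}\mathrm{erf}(\PP^1_k)/\langle \OO_\infty \rangle$, with $\PP^1_k$ compactifying $\AA^1_k = \Spec k[x]$, carrying Beilinson's full exceptional collection $\langle \OO, \OO(1)\rangle$, and $\OO_\infty$ generating the kernel as a single object.

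For the inductive step, one passes from $A_{n-1}$ to $A_n = A_{n-1}\langle x_n\rangle$ by adjoining a single free generator $x_n$ with $dx_n \in A_{n-1}$. We construct $\mathcal{E}_n$ by adjoining to $\mathcal{E}_{n-1}$ a pair of new objects which model this ``one-cell compactification'' of $x_n$ in the noncommutative DG setting (playing roles analogous to $\OO$ and $\OO(1)$ in the prototype), with Hom-complexes between them designed to encode both $x_n$ and the constraint $dx_n$. One extends the exceptional collection by one new exceptional object, and one designs the new kernel generator $S_n$ as a twisted complex built from $S_{n-1}$ together with the newly adjoined objects, chosen so that there is an exact triangle in $\mathcal{E}_n$ relating $S_n$, a shift of $S_{n-1}$, and the new exceptional object. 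This ensures that the thick subcategory $\langle S_n\rangle$ contains $S_{n-1}$ and thus coincides with the thick subcategory generated by $\{S_{n-1}, S_{\mathrm{new}}\}$, so the kernel of $\mathcal{E}_n \to \mathcal{P}\mathrm{erf}(A_n)$ is generated by the single object $S_n$.

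The main obstacle is precisely this single-object condition on $\mathcal{S}$: the naive procedure at each step would enlarge the kernel by an additional generator corresponding to the newly adjoined cell, and arranging for the old generator $S_{n-1}$ to be absorbed into a single new generator $S_n$ via cones in $\mathcal{E}_n$ requires a delicate choice of the adjoined objects and connecting morphisms. The remaining verifications ---namely that $\mathcal{E}_n$ remains proper (its finitely many Hom-complexes being finite-dimensional by inductive control of what is added), that the extended exceptional collection remains full (by checking the new objects are exceptional and the old and new pieces semiorthogonally generate $\mathcal{E}_n$), and that $\mathcal{E}_n/\langle S_n\rangle$ is Morita equivalent to $\mathcal{P}\mathrm{erf}(A_n)$--- should reduce to explicit Hom-complex computations combined with the universal property of DG quotients.
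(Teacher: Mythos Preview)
The paper does not give a proof of this proposition at all: it is stated as a citation of \cite[Theorem~6.2]{Efimov2023SmoothCompactifications} and used as a black box in the proof of Theorem~\ref{theorem-main}. There is therefore no in-paper argument to compare your proposal against.

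That said, a few comments on the proposal itself. The inductive strategy on the number of cells is indeed the line of argument in Efimov's original proof, so the overall shape is correct. However, what you have written is a plan, not a proof: the entire technical content lies in the steps you explicitly flag as unresolved. Concretely, you do not specify (a) what the new exceptional object(s) and their $\Hom$-complexes to the old ones are, (b) the precise twisted complex realizing $S_n$ together with the exact triangle forcing $\langle S_n\rangle = \langle S_{n-1}, S_{\mathrm{new}}\rangle$, or (c) the verification that the DG quotient $\mathcal{E}_n/\langle S_n\rangle$ is Morita equivalent to $\mathcal{P}\mathrm{erf}(A_n)$. Each of these requires real work, and the ``single generator'' step in particular is not a routine consequence of the setup: one needs to arrange, already at the level of the gluing data, a nonzero morphism from (a shift of) the new kernel object to $S_{n-1}$ whose cone still lies in the kernel, and this depends on the specifics of how $dx_n$ interacts with the previously constructed compactification. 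Without these constructions the proposal is not yet a proof.

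One minor inaccuracy: in the $\PP^1$ prototype you go from $\mathcal{E}_0$ with one exceptional object to $\mathcal{E}_1$ with two, so each cell attachment adds a \emph{single} new exceptional object, giving a final collection of length $n+1$; your description of adjoining ``a pair of new objects'' at each step does not match this count.
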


We also need the following notions.

\begin{definition}
Let $(A,d_A)$ be a DG algebra, $(M,d_M)$ a (left) DG $A$-module, and let $N\subseteq M$ be a graded $A$-submodule. We define the following DG submodules of $M$:
\begin{itemize}
  \item The \emph{external DG submodule}
  $
    N_{+} := N + d_M(N).
  $
  \item The \emph{internal DG submodule}
  $
    N_{-} := \{\,m\in N \mid d_M(m)\in N\,\}.
  $
\end{itemize}
\end{definition}

\begin{lemma}\label{lemma-M+-}
    Let $A$ be a DG algebra and $M$ a (left) DG $A$-module. If $N\subseteq M$ is a graded $A$-submodule, then the inclusion $N_{-}\hookrightarrow N_{+}$ is a quasi-isomorphism.
\end{lemma}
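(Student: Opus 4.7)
The plan is to verify that $N_-$ and $N_+$ are honest DG $A$-submodules of $M$, and then compute $H^*(N_-)$ and $H^*(N_+)$ directly, checking that the inclusion $N_- \hookrightarrow N_+$ induces the identity on these groups after a canonical identification.

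For the first step, $N_-$ is closed under $d$ because if $m \in N_-$ then $dm \in N$ and $d(dm) = 0 \in N$, so $dm \in N_-$. The submodule $N_+ = N + d(N)$ is closed under $d$ since $d(N) \subseteq N_+$ and $d(d(N)) = 0$. Stability of $N_+$ under the $A$-action uses the Leibniz rule: for $a \in A$ and $n \in N$,
\[
a \cdot d(n) = (-1)^{|a|}\bigl(d(an) - (da)\, n\bigr) \in d(N) + N = N_+.
\]
Both are therefore DG $A$-submodules, and $N_- \subseteq N \subseteq N_+$ as graded modules, so the inclusion makes sense.

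For the second step, a cycle in $N_-$ is an element $m \in N$ with $dm = 0$ (the condition $dm \in N$ becomes vacuous once $dm=0$), giving $Z^*(N_-) = N \cap \ker d$; a boundary in $N_-$ is $dm$ with $m\in N$ and $dm\in N$, giving $B^*(N_-) = N \cap d(N)$. Hence $H^*(N_-) = (N \cap \ker d)/(N \cap d(N))$. On the other hand, a cycle $n + dn' \in N_+$ satisfies $d(n + dn') = dn = 0$, so $Z^*(N_+) = (N \cap \ker d) + d(N)$, while $B^*(N_+) = d(N_+) = d(N)$ since $d^2 = 0$. Using $d(N) \subseteq \ker d$ and the second isomorphism theorem,
\[
H^*(N_+) = \frac{(N \cap \ker d) + d(N)}{d(N)} \;\cong\; \frac{N \cap \ker d}{(N \cap \ker d) \cap d(N)} \;=\; \frac{N \cap \ker d}{N \cap d(N)}.
\]
This matches $H^*(N_-)$, and unwinding the isomorphism shows it is precisely the map induced by $N_- \hookrightarrow N_+$: a class $[m] \in H^*(N_-)$ with $m \in N \cap \ker d$ maps to $[m] \in H^*(N_+)$, and the computation above identifies both with the same quotient.

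The main obstacle is purely bookkeeping: one has to identify the cycles and boundaries of each submodule correctly (in particular, noting that $Z^*(N_-)$ simplifies because $dm = 0$ already lies in $N$), and then verify that the abstract matching of cohomology groups is indeed induced by the inclusion rather than some non-canonical isomorphism. There is no genuine difficulty beyond this.
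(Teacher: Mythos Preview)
Your proof is correct. Your direct computation of $Z^*$ and $B^*$ for both $N_-$ and $N_+$ goes through, and the second isomorphism theorem identification is indeed the one induced by the inclusion. You also helpfully verify that $N_-$ and $N_+$ are genuine DG $A$-submodules, a point the paper leaves implicit.

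The paper takes a slightly different route: rather than computing $H^*(N_-)$ and $H^*(N_+)$ separately, it shows directly that the quotient complex $N_+/N_-$ is acyclic. Given a cycle $[f + d_M g]$ in $N_+/N_-$ with $f,g \in N$, one has $d_M f \in N_-$, hence $f \in N_-$, so the class equals $[d_M g] = d([g])$ and is a boundary. This avoids the bookkeeping of the second isomorphism theorem and the need to match the abstract isomorphism with the inclusion map, at the cost of working modulo $N_-$ rather than inside $M$. Your approach has the advantage of making the cohomology groups completely explicit, which could be useful if one ever needed to know what $H^*(N_\pm)$ actually is; the paper's approach is a bit shorter and sidesteps the canonicity check entirely since acyclicity of the quotient immediately gives the quasi-isomorphism.
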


\begin{proof}
 
    It suffices to show that the quotient complex $N_{+}/N_{-}$ is acyclic.
    Any element of $N_{+}$ can be written as $f+d_M g$ with $f,g\in N$. If
    $[f+d_M g]\in H^{*}(N_{+}/N_{-})$ is a cycle, then
    \[
      d([f+d_M g])=[d_M f]\in N_{-}.
    \]
    Hence $d_M f\in N$, so $f\in N_{-}$ by definition. Therefore
    $[f+d_M g]=[d_M g]=d([g])$, i.e.\ it is a boundary. Thus $H^{*}(N_{+}/N_{-})=0$, and the inclusion $N_{-}\hookrightarrow N_{+}$ is a quasi-isomorphism.
\end{proof}

\section{Order on monomials}\label{section-order}

In this section, we recall the notion of a linear order on monomials in a free algebra and use it to show that certain two-sided DG ideals in finite-cell DG algebras are acyclic.

\begin{definition}\label{definition-order}
    Let $A = k\langle x_{1}, \dots, x_{n}\rangle$ be a free algebra, and let $<$ be a linear order on all nonzero monomials in $A$ (up to scaling). 

    For any nonzero polynomial $f \in A$, let $\mathrm{TIP}(f)$ denote the largest monomial occurring in $f$ with respect to $<$. We say the order $<$ is preserved by left multiplication, if for every nonzero $u, v\in A$, we have $\TIP(v)\leq \TIP(uv)$.
\end{definition}

\begin{definition}\label{definition-unique-order}
Let $A = k\langle x_{1}, \dots, x_{n}\rangle$ be a graded free algebra, and let $<$ be a linear order on all nonzero monomials in $A$. 

    We say that a pair of distinct nonzero (grading) homogeneous elements $(r,\delta) \in A \times A$ satisfies the \emph{unique order property} with respect to $<$ if the following conditions hold:
    \begin{enumerate}[label=(\roman*)]    
        \item There exists a set $\mathcal{V}$ of monomials in $A$ whose images form a $k$-basis of $A/(r, \delta)$; equivalently,
\[
    A = \operatorname{Span}_{k}(\mathcal{V}) \oplus (r,\delta)
\]
as a $k$-vector space, where $(r,\delta)$ denotes the two-sided ideal generated by $r$ and $\delta$.

        \item For any $v_{1}, v_{2} \in \mathcal{V}$, any $\beta_{1}, \beta_{2} \in \{r,\delta \}$, and any nonzero $f_{1}, f_{2} \in A$, we have
\[
    \mathrm{TIP}(f_{1}\beta_{1} v_{1}) \neq \mathrm{TIP}(f_{2}\beta_{2} v_{2})
    \quad\text{whenever}\quad
    (\beta_{1}, v_{1}) \neq (\beta_{2}, v_{2}).
\]

        \item For every $f \in (r,\delta)$ and every $v \in \mathcal{V}$, we have $\mathrm{TIP}(f) \neq v$.
    \end{enumerate}
\end{definition}

\begin{lemma}\label{lemma-unique-decomposition}
Let $(A, <)$ be a free algebra with a linear order $<$ on monomials. If a pair $(r, \delta)$ satisfies the unique order property, then the two-sided ideal $(r, \delta)$ can be decomposed into left ideals
\[
(r, \delta)=\bigoplus_{v_{\ell}\in\mathcal{V}}\left(Ar v_{\ell} \oplus A\delta  v_{\ell} \right).
\]
\end{lemma}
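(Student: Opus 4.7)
My plan is to prove two things separately: the directness of the sum, and the equality of its total with $(r,\delta)$. Let $M := \sum_{v_\ell \in \mathcal{V}} (Arv_\ell + A\delta v_\ell)$ denote the candidate decomposition.

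For directness, I will apply condition (ii) to a hypothetical nontrivial relation $\sum_\ell f_\ell r v_\ell + \sum_\ell g_\ell \delta v_\ell = 0$. Each nonzero summand is indexed uniquely by a pair $(\beta, v_\ell) \in \{r, \delta\} \times \mathcal{V}$, so by (ii) the nonzero summands have pairwise distinct TIPs. The summand attaining the maximal TIP then contributes a nonzero leading monomial to the total sum, contradicting its vanishing. Hence all coefficients are zero.

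The inclusion $M \subseteq (r,\delta)$ is immediate. For the reverse, I would argue by strong induction on the (cohomological) degree $n$, using that $r$ and $\delta$ are homogeneous of strictly positive degree — a natural condition ensuring $(r, \delta)$ is a proper graded ideal in a positively graded free algebra. Take a homogeneous $x \in (r,\delta)_n$ and expand it as a finite $k$-combination $x = \sum_j e_j u_j \beta_j w_j$ with $u_j, w_j$ homogeneous monomials and $\beta_j \in \{r, \delta\}$. Then $|w_j| = n - |u_j| - |\beta_j| < n$ since $|\beta_j| \ge 1$. Condition (i) decomposes $w_j = a_j + b_j$ with $a_j \in \operatorname{Span}_k(\mathcal{V})$ and $b_j \in (r,\delta)$; since $\mathcal{V}$ consists of monomials and $(r,\delta)$ is a graded ideal, the decomposition is graded, so both $a_j$ and $b_j$ sit in degree $|w_j| < n$. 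The inductive hypothesis applied to $b_j$ gives $b_j \in M$, and then $u_j \beta_j b_j \in M$ by the left $A$-module structure of $M$. Simultaneously $u_j \beta_j a_j \in M$ because $a_j$ is a $k$-combination of elements of $\mathcal{V}$. Summing over $j$ yields $x \in M$.

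The main obstacle is the well-foundedness of the degree induction, which hinges on $|r|, |\delta| \ge 1$; one has to verify this holds in the intended setting or else handle low-degree cases by hand. The base case at the smallest degree where $(r,\delta)_n$ is nontrivial is straightforward, since $r = r \cdot 1$ and $\delta = \delta \cdot 1$ both lie in $M$ after expanding $1 \in A$ via the $\mathcal{V}$-decomposition of (i). I note that condition (iii), while not invoked directly in the argument above, provides the TIP-compatibility of $\mathcal{V}$ that supports neighbouring results in this section.
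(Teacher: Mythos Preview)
Your directness argument is correct and essentially identical to the paper's: both observe that in a relation $\sum_i a_i\beta_i v_i=0$ with pairwise distinct $(\beta_i,v_i)$ and nonzero $a_i$, condition~(ii) forces the $\operatorname{TIP}(a_i\beta_i v_i)$ to be pairwise distinct, so the maximal one cannot cancel.

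The gap is in your spanning argument. You induct on cohomological degree under the hypothesis $|r|,|\delta|\ge 1$ in a positively graded free algebra, remarking that this ``has to be verified in the intended setting.'' In the paper's sole application (Lemma~\ref{lemma-unique-order} and Corollary~\ref{corollary-acyclic-ideal}) one has $r=x_3$ with $|x_3|=-1$ and $\delta=x_1x_2-x_2x_1-1$ with $|\delta|=0$, so the hypothesis fails outright. Worse, since $|x_3|=-1$, the algebra $A=k\langle x_1,x_2,x_3\rangle$ has elements of every nonpositive cohomological degree: there is no base case for the strong induction, and the left factor $u_j$ can itself have arbitrarily negative degree, so your inequality $|w_j|=n-|u_j|-|\beta_j|<n$ breaks down even when $|\beta_j|>0$. ``Handling low-degree cases by hand'' is therefore not an option.

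The paper treats the inclusion $(r,\delta)\subseteq\sum_{v_\ell}(Arv_\ell+A\delta v_\ell)$ as evident and spends all its effort on directness. If you want to supply a proof in the spirit of yours, replace cohomological degree by \emph{word length} in the free algebra, which is bounded below by $0$ regardless of the DG grading: from $A=\operatorname{Span}_k(\mathcal{V})\oplus(r,\delta)$ one obtains $(r,\delta)=M+(Ar+A\delta)\cdot(r,\delta)$, and since $1\in\mathcal{V}$ in the application one can iterate, reducing $\beta w$ to terms whose right-hand factor is strictly shorter. You are right that condition~(iii) is not used here.
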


\begin{proof}
    Clearly, we have 
    \[
    (r, \delta)=\sum_{v_{\ell}\in\mathcal{V}}\left(Ar v_{\ell} \oplus A\delta  v_{\ell} \right).
    \]
    To prove it is a direct sum, we need to show that every element $f\in(r, \delta)$ can be uniquely written as 
    \[
    f=\sum a_{i} r v_{i} + \sum b_{j}\delta v_{j}, \quad v_{i}, v_{j}\in \mathcal{V},\ a_{i}, b_{j}\in A.
    \]
    It suffices to assume $f=0$. Suppose
    \begin{equation}\label{equation-0} 
        0=\sum_{i} a_{i} \beta_{i} v_{i}, \quad v_{i}\in \mathcal{V},\ \beta_{i}\in \{\delta, r \},\ 0\neq a_{i}\in A.
    \end{equation}
    Here $(\beta_{i}, v_{i})\neq(\beta_{j}, v_{j})$ for different $i$ and $j$. By (ii) of the unique order property (Definition~\ref{definition-unique-order}), we may assume $\TIP(a_{i} \beta_{i} v_{i})>\TIP(a_{i+1} \beta_{i+1} v_{i+1})$ for all $i$. Hence 
    \[
    \TIP\Big(\sum_{i\geq 2} a_{i} \beta_{i} v_{i}\Big)=\TIP(a_{2} \beta_{2} v_{2})<\TIP(-a_{1} \beta_{1} v_{1}).
    \]
    However, by (\ref{equation-0}), they should be equal, a contradiction. Hence all $a_{i}=0$.
\end{proof}

\subsection{A technical lemma}

In this section, we prove the key technical lemma of this paper.

\begin{lemma}\label{lemma-ideal-acyclic}
    Let $(A, d, <)$ be a finite-cell DG algebra with a linear order $<$ on monomials preserved by left multiplication (Definition \ref{definition-order}). Assume $\TIP(df)<\TIP(f)$ for every $f\in A$.

    If a pair of distinct nonzero homogeneous elements $(r, \delta)$ satisfies the unique order property (Definition \ref{definition-unique-order}) and $dr=\delta$,
    then the two-sided DG ideal $(r, \delta)$ is acyclic.
\end{lemma}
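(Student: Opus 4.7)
The plan is to apply Lemma~\ref{lemma-M+-} with $M = I := (r,\delta)$ and $N := \bigoplus_{v_\ell \in \mathcal{V}} A r v_\ell$, the left $A$-submodule spanned by the $r$-side of the decomposition in Lemma~\ref{lemma-unique-decomposition}. This is a graded left $A$-submodule of $I$ (it is a direct summand of $I$ as a graded left $A$-module).

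First I would verify $N_{+} = I$. Applying the Leibniz rule and $dr = \delta$,
\[
d(a r v_\ell) \;=\; (da)\, r v_\ell \;+\; (-1)^{|a|} a\,\delta v_\ell \;+\; (-1)^{|a|+|r|} a r\, dv_\ell,
\]
so as $a$ varies the middle summand produces every element of $A\delta v_\ell$. Combined with $N$ itself, this forces $N + dN \supseteq \bigoplus_\ell (Arv_\ell \oplus A\delta v_\ell) = I$ by Lemma~\ref{lemma-unique-decomposition}; the reverse inclusion is automatic because $I$ is DG-closed. Hence $N_{+} = I$.

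The substantive step is to prove $N_{-} = 0$. Take $m = \sum_\ell a^r_\ell r v_\ell \in N_{-}$; the condition $dm \in N$ says that the $\delta$-part of $dm$ in the canonical decomposition vanishes. Writing $dv_\ell = \sum_m c_m^\ell v_m + u_\ell$ with $u_\ell \in I$ and further expanding $u_\ell = \sum_p \alpha_p^\ell r v_p + \sum_q \beta_q^\ell \delta v_q$, a direct computation identifies the $\delta v_{\ell_0}$-coefficient of $dm$ as
\[
(-1)^{|a^r_{\ell_0}|} a^r_{\ell_0} \;+\; \sum_\ell (-1)^{|a^r_\ell|+|r|}\, a^r_\ell\, r\, \beta_{\ell_0}^\ell \;\in\; A,
\]
and this element of $A$ must vanish for every $\ell_0$. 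Suppose $m \neq 0$ and let $\ell^*$ be the unique index maximizing $\TIP(a^r_{\ell^*} r v_{\ell^*})$; uniqueness follows from Definition~\ref{definition-unique-order}(ii). From $\TIP(df) < \TIP(f)$ applied to $v_\ell$, one gets $\TIP(u_\ell) \leq \TIP(dv_\ell) < \TIP(v_\ell)$, which gives a bound on $\TIP(\beta_{\ell^*}^\ell)$. Combining this with the behavior of $\TIP$ under left multiplication and the finer separation given by the unique order property, I would then show that the leading contribution $(-1)^{|a^r_{\ell^*}|} a^r_{\ell^*}$ in the equation at $\ell_0 = \ell^*$ cannot be cancelled by the sum of correction terms, contradicting the vanishing. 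Hence $m = 0$.

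With $N_{-} = 0$ and $N_{+} = I$, Lemma~\ref{lemma-M+-} gives $H^{*}(I) = H^{*}(N_{+}) \cong H^{*}(N_{-}) = 0$, so the two-sided DG ideal $(r,\delta)$ is acyclic. The main obstacle is the TIP analysis in the $N_{-} = 0$ step: one must control how the correction terms arising from $a^r_\ell r\, dv_\ell$ sit in the canonical decomposition and show that they cannot conspire to cancel the leading $a^r_{\ell^*}$ term. This is where the hypotheses of Definition~\ref{definition-unique-order}, the condition $\TIP(df) < \TIP(f)$, and the compatibility of $<$ with left multiplication must all be used simultaneously.
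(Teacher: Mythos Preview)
Your strategy matches the paper's exactly: take $N = \bigoplus_\ell A r v_\ell$, verify $N_+ = (r,\delta)$, prove $N_- = 0$, and invoke Lemma~\ref{lemma-M+-}. The one substantive gap is your choice of $\ell^*$. Maximizing $\TIP(a^r_\ell r v_\ell)$ does not by itself force the correction terms $a^r_\ell\, r\, \beta_{\ell^*}^\ell$ to vanish or to have smaller $\TIP$ than $a^r_{\ell^*}$; the hypotheses give no direct comparison between $\TIP(a^r_{\ell^*})$ and $\TIP(a^r_\ell r \beta_{\ell^*}^\ell)$, so the coefficient equation at $\ell_0=\ell^*$ has no evident contradiction.

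The paper instead takes $\ell^*$ so that $v_{\ell^*}$ is maximal among the $v_\ell$ actually occurring in $m$. Then for every $\ell$ and every index $q$ with $\beta_q^\ell\neq 0$ one has the chain
\[
v_q \;\le\; \TIP(\beta_q^\ell \delta v_q) \;\le\; \TIP\Bigl(\textstyle\sum_{q'}\beta_{q'}^\ell \delta v_{q'}\Bigr)\;\le\; \TIP(u_\ell) \;\le\; \TIP(dv_\ell) \;<\; v_\ell \;\le\; v_{\ell^*},
\]
using, in order: preservation of $<$ under left multiplication; condition~(ii) of Definition~\ref{definition-unique-order} (the $\TIP$'s of the $\beta_{q'}^\ell \delta v_{q'}$ are pairwise distinct, so no cancellation); condition~(ii) again (the $r$-part and $\delta$-part of $u_\ell$ have distinct $\TIP$'s); condition~(iii) (no cancellation between $u_\ell\in(r,\delta)$ and the $\mathcal V$-part of $dv_\ell$); and the hypothesis $\TIP(df)<\TIP(f)$. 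Hence $v_q < v_{\ell^*}$ for every such $q$, so $\beta_{\ell^*}^\ell = 0$ for all $\ell$, and your coefficient equation at $\ell_0 = \ell^*$ collapses to $a^r_{\ell^*}=0$, the desired contradiction. With this adjustment your argument is precisely the paper's.
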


\begin{proof}
 Write $A=(r, \delta)\oplus V$ for a $k$-vector space $V$ such that
    $\mathcal{V}=\{f_{i}: i\in I\}$ is a monomial basis of $V$. 
    
   By Lemma \ref{lemma-unique-decomposition}, we have
    $(r, \delta)=\bigoplus_{v_{\ell}\in\mathcal{V}}(Ar v_{\ell} \oplus A\delta v_{\ell} )$ as left DG ideals.
    Let $M=\bigoplus_{v_{\ell}\in\mathcal{V}} A r v_{\ell}$ and $N=\bigoplus_{v_{\ell}\in\mathcal{V}} A \delta v_{\ell}$. Then $M$ is not a left DG ideal. But we have
\[
M_{+}=M+d(M)=\bigoplus_{v_{\ell}\in\mathcal{V}}\bigl(A \delta  v_{\ell}\oplus A r  v_{\ell}\bigr)=(\delta ,r )
\]
as left DG ideals. By Lemma \ref{lemma-M+-}, we will show that $(r, \delta)=M_{+}$ is acyclic by showing $M_{-}=0$.

Suppose $M_{-} \neq 0$. Since $M_{-} \subseteq M$, there is a homogeneous element
$
0\neq \sum_{\ell} g_{\ell} r v_{\ell} \in M_{-}.
$
Here $g_{\ell} \in A$, and $g_{\ell}, v_{\ell} \neq 0$ for all $\ell$. Moreover, we may assume that the $v_{\ell}$ are pairwise distinct.
By definition of $M_{-}$, we have 
    $$d(\sum_{\ell} g_{\ell}rv_{\ell})=\sum_{\ell}(dg_{\ell})rv_{\ell}+\sum_{\ell}(-1)^{|g_{\ell}|} g_{\ell}\delta v_{\ell}+\sum_{\ell}(-1)^{|g_{\ell}|+|r|} g_{\ell}r(dv_{\ell})\in M.$$
    Since $\sum_{\ell} (dg_{\ell}) r v_{\ell}\in M$, we have $\sum_{\ell}(-1)^{|g_{\ell}|} g_{\ell}\delta v_{\ell}+\sum_{\ell}(-1)^{|g_{\ell}|+|r|} g_{\ell}r(dv_{\ell})\in M$. For every $\ell$, we may write 
    \begin{equation}\label{equation-dv}
dv_{\ell}=h^{0}_{\ell}+h_{\ell}^{1}+\sum_{s} g^{s}_{\ell}\delta u^{s}_{\ell}, \quad h_{\ell}^{0}\in V, h^{1}_{\ell}\in M, g^{s}_{\ell} \in A, u^{s}_{\ell}\in \mathcal{V}. 
    \end{equation}
    Here, for every fixed $\ell$, $u_{\ell}^{s}$ are distinct for different $s$. By definition of $M$, and noting that $M$ is a left ideal, we have 
    $$\sum_{\ell} (-1)^{|g_{\ell}|+|r|} g_{\ell}rh^{0}_{\ell}, \sum_{\ell} (-1)^{|g_{\ell}|+|r|} g_{\ell} r h^{1}_{\ell}\in M.$$ 
    Hence we must have \begin{equation}\label{equation-relation}
    \sum_{\ell}(-1)^{|g_\ell|}
    g_{\ell}\delta v_{\ell}+\sum_{\ell}\sum_{s}(-1)^{|g_{\ell}|+|r|}g_{\ell}r g^{s}_{\ell}\delta  u^{s}_{\ell}
    \in M. 
    \end{equation}
For every $\ell$, by (ii) of Definition \ref{definition-unique-order}, we have $\TIP(\sum_{s} g_{\ell}^{s}\delta u_{\ell}^s)\neq \TIP(h^1_{\ell})$. Hence 
$$\TIP(\sum_{s} g_{\ell}^{s}\delta u_{\ell}^s)\leq \TIP(h^1_{\ell}+\sum_{s} g_{\ell}^{s}\delta u_{\ell}^s).$$
Since $h_{\ell}^{1}+\sum_{s}g_{\ell}^{s}\delta u_{\ell}^s\in (r,\delta)$ and $v^0_{\ell}\in \mathcal{V}$, by (iii) of Definition \ref{definition-unique-order} we have $\TIP(v^0_{\ell})\neq \TIP(h^1_{\ell}+\sum_{s} g_{\ell}^{s}\delta u_{\ell}^s)$. Hence 
\begin{equation}\label{equation-less-than-d}
\TIP(\sum_{s} g_{\ell}^{s}\delta u_{\ell}^s)\leq \TIP(h^1_{\ell}+\sum_{s} g_{\ell}^{s}\delta u_{\ell}^s)\leq \TIP(h^0_{\ell}+h^1_{\ell}+\sum_{s} g_{\ell}^{s}\delta u_{\ell}^s)=\TIP(dv_{\ell}).
\end{equation}
Hence for every $\ell, s$, we have
\begin{equation}\label{equation-compare}
  u_{\ell}^s\leq \TIP(g_{\ell}^s \delta u_{\ell}^s)\leq \TIP(\sum_{t} g_{\ell}^t \delta u_{\ell}^t)\leq \TIP(dv_{\ell})<v_{\ell}.
\end{equation}
    The first inequality is because $<$ is preserved by left multiplication. The second inequality is by (ii) of Definition \ref{definition-unique-order}. The thrid inequality is (\ref{equation-less-than-d}). The last inequality is by assumption $\TIP(df)<\TIP(f)$ for every $f\in A$.
    Since (\ref{equation-relation}) is in $N$ by definition and $M \cap N=0$, we have (\ref{equation-relation})$=0$. We may rewrite (\ref{equation-relation}) in the form
    \begin{equation}\label{equation-contradiction}
    0=\sum_{\ell}
   (-1)^{|g_{\ell}|} g_{\ell}\delta v_{\ell}+\sum_{\ell}\sum_{s}(-1)^{|g_{\ell}|+|r|} g_{\ell}r g^{s}_{\ell}\delta  u^{s}_{\ell}=\sum_{j} h_{j} \delta w_{j},\quad h_{j}\in A, w_{j}\in \mathcal{V}.
    \end{equation}
  Here, $w_{j}\in \mathcal{V}$ are distinct. Assume $v_{1}=\max\{ v_{\ell} \}$. By (\ref{equation-compare}), $u_{\ell}^s<v_{1}$ for all $\ell, s$. So the $(-1)^{|g_{1}|}g_{1}\delta v_{1}$ term is not affected. We may assume $w_{1}=\max\{ w_{j} \}=v_{1}$, and then $h_{1}=(-1)^{|g_{1}|}g_{1}\neq 0$.
However, by applying Lemma~\ref{lemma-unique-decomposition} to (\ref{equation-contradiction}), we obtain $h_{j}=0$ for all $j$, a contradiction.

So we have $M_{-}=0$.
Since $M_{+}$ is quasi-isomorphic to $M_{-}$ by Lemma~\ref{lemma-M+-}, we have $H^{\ast}(M_{+})=0$, so $M_{+}$ is acyclic.
\end{proof}

\subsection{Examples}
For better understanding of the notions introduced in this section, we now present some explicit examples using the right lexicographic order.

\begin{definition}[Right lexicographic order]\label{definition-right-lexi}
    Let $(S,\prec)$ be a linearly ordered set. The \emph{right lexicographic order} on $S^{\mathbb{N}}$ is defined as follows.
    We write
    \[
        (\dots, s_{2}, s_{1}) \prec (\dots, s'_{2}, s'_{1})
    \]
    if there exists an index $i \ge 1$ such that
    \[
        s_j = s'_j \quad \text{for all } j < i,
        \qquad\text{and}\qquad
        s_i \prec s'_i.
    \]
\end{definition}

The following is an explicit example of right lexicogrphic order.
\begin{example}\label{example-naive}

Let $A=(k\langle x_{1}, \cdots, x_{n} \rangle, \prec )$ be a free algebra equipped with the right lexicographic order induced by
\[
1\prec x_{1}\prec x_{2}\prec \cdots \prec x_{n}.
\]
For example, let
\[
u = x_{1}^{2}x_{2}x_{3}^3,\quad
v = x_{3}x_{1}x_{2}x_{3}^2x_{1}x_{3},\quad
w = x_{3}x_{1}^{5}x_{2}^3 x_{3},
\]
and $f = 4u + 7v + 2w$.
The rightmost letters of $u, v, w$ are all $x_{3}$, so we compare the second rightmost letters. The second rightmost letter of $u$ is $x_{3}$, that of $v$ is $x_{1}$, and that of $w$ is $x_{2}$. Hence we have
\[
v \prec w \prec u, \quad \TIP(f)=4u.
\]

The pair $(x_{i}, x_{j})$ satisfies the unique order property (Definition~\ref{definition-unique-order}).
Concretely, let 
$
    \mathcal{V}=\{x_{i_{1}}x_{i_{2}}\cdots x_{i_{r}} \mid i_{k}\neq i,j \text{ for all } k\}.
$
Then $\mathcal{V}$ is a monomial basis of the $k$-vector space 
$
V = k\langle x_{1}, \cdots, x_{i-1}, x_{i+1}, \cdots, x_{j-1}, x_{j+1}, \cdots, x_{n} \rangle,
$
and we have $A = V \oplus (x_{i}, x_{j})$. Moreover,
$
    (x_{i}, x_{j})=\bigoplus_{v_{\ell}\in\mathcal{V}}\bigl(Ax_{i}v_{\ell} \oplus Ax_{j}v_{\ell}\bigr)
$
as a left ideal. Hence conditions (i) and (ii) in Definition~\ref{definition-unique-order} hold. 
For every $f\in (x_{i}, x_{j})$ and $v\in V$, $\TIP(f)$ is a monomial in $(x_{i}, x_{j})$ and $v\notin (x_{i}, x_{j})$. Hence condition (iii) also holds.

\end{example}

\begin{corollary}
    Let $A=(k\langle x_{1}, \cdots, x_{n}\rangle, d)$ be a finite cell DG algebra. If $dx_{j}=x_{i}$ for a pair $i<j$, then the 2-sided ideal $(x_{i}, x_{j})$ is acyclic.
\end{corollary}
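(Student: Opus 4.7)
The plan is to apply Lemma~\ref{lemma-ideal-acyclic} with $r := x_j$ and $\delta := x_i$, equipping $A$ with the right lexicographic order $<$ of Example~\ref{example-naive}, induced by $1 \prec x_1 \prec \cdots \prec x_n$. The hypothesis $dx_j = x_i$ immediately gives $dr = \delta$; moreover $i \neq j$ forces $r \neq \delta$, and both are nonzero homogeneous generators. Example~\ref{example-naive} already verifies that the pair $(x_i, x_j)$ satisfies the unique order property of Definition~\ref{definition-unique-order}; since this property is manifestly symmetric in the two entries, $(r, \delta) = (x_j, x_i)$ satisfies it as well. Hence it remains only to check the two order-theoretic hypotheses of Lemma~\ref{lemma-ideal-acyclic}: preservation of $<$ under left multiplication, and $\TIP(df) \prec \TIP(f)$ for every nonzero $f \in A$.

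The inequality $\TIP(df) \prec \TIP(f)$ is where I would start, since it follows directly from the finite-cell structure. The condition $dx_k \in k\langle x_1,\ldots,x_{k-1}\rangle$ ensures every monomial of $dx_k$ consists only of letters $\prec x_k$, so its rightmost letter is $\prec x_k$ and hence it is $\prec x_k$ in right lex. Applying the graded Leibniz rule to any monomial $m = y_1 \cdots y_s$, each summand of $dm$ of the form $\pm y_1 \cdots (dy_k) \cdots y_s$ agrees with $m$ in the rightmost $s-k$ positions, but at position $s-k+1$ from the right it carries a letter inherited from $dy_k$ which is $\prec y_k$; thus each summand is $\prec m$ in right lex. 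Extending linearly to polynomials completes this step.

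The more delicate verification, and the step I expect to be the main obstacle, is showing that the right lex order is preserved by left multiplication, i.e.\ $\TIP(v) \preceq \TIP(uv)$ for all nonzero $u, v \in A$. My plan is to argue position by position from the right. Writing $v_0 := \TIP(v)$ with rightmost letter $y^\ast$, and letting $v^{(y^\ast)}$ denote the sum of monomials of $v$ ending in $y^\ast$, the free algebra being a domain gives $u \cdot v^{(y^\ast)} \neq 0$. Its monomials all end in $y^\ast$, while those of $u \cdot (v - v^{(y^\ast)})$ end in letters $\prec y^\ast$, so no cancellation between the two groups is possible and $\TIP(uv)$ too ends in $y^\ast$, matching $v_0$ at the rightmost position. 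The main bookkeeping obstacle will be iterating this slicing argument upward to successive positions of $v_0$: monomials $v_j$ of $v$ shorter than $v_0$, once left-multiplied by $u$, inject letters of $u$ into positions where $v_0$ still has genuine letters, and one must verify that the maximum letter of $u$ available at such a position is at least the corresponding letter of $v_0$ (a direct right-lex comparison, handled case by case on whether the length-one slice $y^\ast$ lies in $v$). After at most $|v_0|$ iterations this yields $\TIP(uv) \succeq v_0$, at which point Lemma~\ref{lemma-ideal-acyclic} delivers the acyclicity of the two-sided DG ideal $(x_i, x_j)$.
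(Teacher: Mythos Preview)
Your proposal is correct and follows exactly the paper's approach: apply Lemma~\ref{lemma-ideal-acyclic} with the right lexicographic order of Example~\ref{example-naive}, invoking that example for the unique order property and then checking the two order hypotheses. The only difference is that you spell out arguments for preservation under left multiplication and for $\TIP(df)\prec\TIP(f)$, whereas the paper simply asserts both in one line each.
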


\begin{proof}

Note that the order $\prec$ defined in Example~\ref{example-naive} is preserved by left multiplication.
By Example~\ref{example-naive}, the right lexicographic order $\prec$ on $A$ makes $(x_{i}, x_{j})$ satisfy the unique order property.
By the definition of a finite-cell DG algebra, we have $\TIP(df)\prec\TIP(f)$ for all $f\in A$. Hence, by Lemma~\ref{lemma-ideal-acyclic}, $(x_{i}, x_{j})$ is acyclic.
\end{proof}

\begin{remark}
If $(r, \delta)$ is an arbitrary pair such that $dr=\delta$ and $\delta\neq 0$, then the two-sided ideal $(r, \delta)$ may not be acyclic in general. For instance, let $A=k\langle x_{1}, x_{2} \rangle$ where $dx_{1}=0$ and $dx_{2}=x_{1}^{2}$.
Then one may explicitly compute that $H^{*}(A)$ is infinite dimensional. If $(x_{2}, x_{1}^{2})$ were acyclic, then $A$ would be quasi-isomorphic to
\[
A/(x_{2}, x_{1}^{2})\cong k\langle x_{1} \rangle/(x_{1}^{2}),
\]
which is finite dimensional. Hence $(x_{2}, x_{1}^{2})$ is not acyclic. The reason is that condition (ii) in Definition~\ref{definition-unique-order} is not satisfied. 
\end{remark}

\section{Main result}\label{main result}

Recall that we introduced the right lexicographic order in Definition~\ref{definition-right-lexi}. However, for the purpose of proving our main theorem, this order is not suitable (see Remark~\ref{remark-degree-order-reason}). We therefore introduce a different order on nonzero monomials in a free algebra.

\begin{definition}[Degree order]\label{definition-degree-order}
    Let $A=k\langle x_{1}, \cdots, x_{n}\rangle$ be a free $k$-algebra, and let $u\in A$ be a monomial. The \emph{degree vector} of $u$ is defined as 
    \[
        \deg(u)=(m_{1}, \cdots, m_{n})\in \ZZ_{\geq 0}^{n},
    \]
    where $m_{i}$ is the number of occurrences of $x_{i}$ in $u$. 

    We put the natural order $\prec$ on $\ZZ_{\geq 0}$ and the right lexicographic order $\prec$ on the space of degree vectors $\ZZ^{n}_{\geq 0}$. For monomials $u,v\in A$, we define $v<u$ if $\deg(v)\prec \deg(u)$, or $\deg(v)=\deg(u)$ and $v \prec u$ in the right lexicographic order (Definition~\ref{definition-right-lexi}) induced by
    \[
        1\prec x_{1}\prec x_{2}\prec \cdots \prec x_{n}.
    \]
\end{definition}

The following is an explicit example of the degree order.

\begin{example}
Let $A=k\langle x_{1}, x_{2}, x_{3}\rangle$ be a free algebra. Let
\[
u = x_{1}^{2}x_{2}x_{3}^3,\quad
v = x_{3}x_{1}x_{2}x_{3}^2x_{1}x_{3},\quad
w = x_{3}x_{1}^{5}x_{2}^3 x_{3},\quad
f = 4u+7v+2w.
\]
Then we have 
\[
\deg(u)=(2,1,3),\quad \deg(v)=(2,1,3),\quad \deg(w)=(5,3,2).  
\]
So $\deg(w)\prec\deg(u)=\deg(v).$ Hence $w<u$ and $w<v$. The second letter from the right in $u$ is $x_{3}$, and that in $v$ is $x_{1}$. Hence, by Definition~\ref{definition-right-lexi}, $v\prec u$. By Definition~\ref{definition-degree-order}, we have 
$
w<v<u,\quad  \TIP(f)=4u.
$
\end{example}

It turns out that the degree order enjoys exactly the properties we need.
\begin{lemma}\label{lemma-TIP}
    Let $A=(k\langle x_{1},\ldots,x_{n}\rangle, <)$ be a finite-cell DG algebra equipped with the degree order (Definition \ref{definition-degree-order}). For every polynomial $f\in A$, we have $\mathrm{TIP}(df)<\TIP(f)$.
\end{lemma}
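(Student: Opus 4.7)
The plan is to reduce to the case of a single monomial by applying $d$ termwise, and then to exploit the finite-cell condition $dx_{k}\in k\langle x_{1},\ldots,x_{k-1}\rangle$ to show that every monomial appearing in $du$ has strictly smaller degree vector than $u$ in the right-lexicographic order on $\ZZ_{\geq 0}^{n}$. Because the degree order refines this degree-vector order, such a drop in degree vector is already enough to conclude, with no tie-breaking step needed.

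First I would fix a monomial $u=x_{i_{1}}x_{i_{2}}\cdots x_{i_{N}}$ and apply the Leibniz rule:
\[
    du=\sum_{j=1}^{N}(-1)^{\epsilon_{j}}\,x_{i_{1}}\cdots x_{i_{j-1}}\,(dx_{i_{j}})\,x_{i_{j+1}}\cdots x_{i_{N}}.
\]
By the finite-cell hypothesis, each $dx_{i_{j}}$ is a $k$-linear combination of monomials in $k\langle x_{1},\ldots,x_{i_{j}-1}\rangle$. Expanding, every monomial $w$ occurring in $du$ is obtained from $u$ by removing the letter at position $j$ and inserting a word in the variables $x_{l}$ with $l<i_{j}$. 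Hence, writing $\deg(u)=(m_{1},\ldots,m_{n})$, the degree vector $\deg(w)$ agrees with $\deg(u)$ in every coordinate $l>i_{j}$, equals $m_{i_{j}}-1$ in coordinate $i_{j}$, and is possibly larger in coordinates $l<i_{j}$. Reading right-lexicographically, the first disagreement occurs in coordinate $i_{j}$ and is a strict drop, so $\deg(w)\prec\deg(u)$.

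For a general polynomial $f$, set $u:=\TIP(f)$ and write $f=\sum_{v}c_{v}v$ over distinct monomials. Each $v$ with $c_{v}\neq 0$ satisfies $v\leq u$, hence $\deg(v)\preceq\deg(u)$ by the definition of the degree order. Combining this with the monomial case, every monomial $w$ appearing in $dv$, and therefore in $df$, satisfies $\deg(w)\prec\deg(v)\preceq\deg(u)$. Thus every monomial in $df$ has degree vector strictly less than $\deg(u)$, which already forces it to be strictly less than $u$ in the degree order, regardless of tie-breaking. Assuming $df\neq 0$, this gives $\TIP(df)<\TIP(f)$. The conceptual point worth emphasizing, and the reason the naive right-lex order on monomials fails (cf.\ the remark at the end of Section~\ref{section-order}), is that reading degree vectors from the right lets the $-1$ drop at coordinate $i_{j}$ be seen before the coordinates $l<i_{j}$ into which $dx_{i_{j}}$ may pile up extra letters; so those extra letters cannot undo the decrease.
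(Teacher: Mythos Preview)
Your proof is correct and follows essentially the same approach as the paper's: reduce to monomials, apply the Leibniz rule, and use the finite-cell condition $dx_{i_j}\in k\langle x_1,\ldots,x_{i_j-1}\rangle$ to see that every monomial in $du$ has strictly smaller degree vector than $u$ in the right-lexicographic order. The paper states this in two sentences; you have spelled out the degree-vector bookkeeping and the passage from monomials to general polynomials explicitly, but there is no substantive difference in method.
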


\begin{proof}
We may assume that $f$ is a monomial.
By Definition~\ref{definition-finite-cell}, for all $i$ we have $dx_{i} \in k\langle x_{1},\ldots,x_{i-1}\rangle$. Hence, for every monomial $g$ occurring in $df$, we have $\deg(g)\prec\deg(f)$.
\end{proof}

\begin{lemma}\label{lemma-unique-order}
    Let $A = (k\langle x_{1}, x_{2}, x_{3} \rangle, d, <)$ be a finite-cell DG algebra where $|x_{1}| = |x_{2}| = 0$ and $|x_{3}| = -1$, and 
    \[
        d x_{1} = d x_{2} = 0
        \quad\text{and}\quad
        d x_{3} = \delta := x_{1} x_{2} - x_{2} x_{1} - 1.
    \]
    Then the pair $(x_{3}, \delta)$ satisfies the unique order property (Definition~\ref{definition-unique-order}).
    Here, $<$ is the degree order in Definition~\ref{definition-degree-order}.
\end{lemma}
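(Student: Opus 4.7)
The plan is to take $\mathcal{V} = \{x_2^a x_1^b : a, b \ge 0\}$ as the candidate monomial basis and verify the three conditions of Definition~\ref{definition-unique-order} in turn. Condition (i) follows by identifying $A/(x_3, \delta)$ with the first Weyl algebra $A_1 = k\langle x_1, x_2\rangle/(x_1 x_2 - x_2 x_1 - 1)$ (first quotienting by $x_3$, then by the image of $\delta$); the Poincaré--Birkhoff--Witt basis of $A_1$ is precisely $\{x_2^a x_1^b\}_{a,b \ge 0}$, giving $A = \Span_k(\mathcal{V}) \oplus (x_3, \delta)$.

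The heart of the argument is the multiplicative identity
\[
\TIP(f\beta v) \;=\; \TIP(f)\cdot \TIP(\beta)\cdot v
\]
for all nonzero $f \in A$, $\beta \in \{x_3, \delta\}$, and $v \in \mathcal{V}$, where $\TIP(x_3) = x_3$ and $\TIP(\delta) = x_1 x_2$ (since $x_1 x_2$ and $x_2 x_1$ share degree vector $(1,1,0)$ but $x_2 x_1 \prec x_1 x_2$ in right-lex word order, while the constant term has strictly smaller degree). This identity follows from degree-vector additivity under concatenation together with a check that no cancellation occurs among the top-degree monomials of $f \beta v$ (the words $f_\alpha \TIP(\beta) v$ and, in the $\delta$ case, $-f_\alpha x_2 x_1 v$ for distinct top-degree monomials $f_\alpha$ of $f$ are pairwise distinct). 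Condition (ii) then reduces to showing that the word $\TIP(f)\TIP(\beta) v$ determines the pair $(\beta, v)$. The crucial structural observation is that every $v \in \mathcal{V}$, being of the form $x_2^a x_1^b$, contains no $x_1 x_2$ substring. Hence when $\beta = x_3$, the suffix of the TIP after its rightmost $x_3$ is exactly $v$, a word free of $x_1 x_2$; whereas when $\beta = \delta$, the TIP always contains the explicitly introduced $x_1 x_2$ to the right of any $x_3$ in $\TIP(f)$. This distinguishes the two $\beta$-types, and within each type, reading off the trailing run of $x_1$'s and then the preceding run of $x_2$'s from the suffix recovers $v$.

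For (iii), I would invoke the projection $\pi : A \to \Span_k(\mathcal{V})$ along $(x_3, \delta)$ coming from (i), which can be computed by iteratively applying the rewrites $x_3 \mapsto 0$ and $x_1 x_2 \mapsto x_2 x_1 + 1$. Each rewrite either strictly decreases a monomial's degree vector (via $x_3 \mapsto 0$ or the constant term produced by $x_1 x_2 \mapsto x_2 x_1 + 1$), or preserves it while replacing an $x_1 x_2$ by $x_2 x_1$, which is strictly smaller in right-lex word order; moreover $\pi$ fixes $\mathcal{V}$ pointwise. Consequently $\TIP(\pi(g)) \le \TIP(g)$ for every $g \in A$. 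If some $f \in (x_3, \delta)$ satisfied $\TIP(f) = v \in \mathcal{V}$, writing $f = cv + g$ with $c \neq 0$ and every monomial of $g$ strictly below $v$, applying $\pi$ would yield $0 = \pi(f) = cv + \pi(g)$ with $\TIP(\pi(g)) \le \TIP(g) < v$, a contradiction.

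The main obstacle I anticipate is the TIP multiplicativity together with the mixed case $\beta_1 = x_3$, $\beta_2 = \delta$ of condition (ii): both rely on the specific shape $x_2^a x_1^b$ of elements of $\mathcal{V}$ and on careful tracking of the presence or absence of $x_1 x_2$ substrings in the TIP words; one must also verify that the many potential cancellations among top-degree monomials of $f\beta v$ do not occur.
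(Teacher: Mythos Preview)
Your proposal is correct and follows essentially the same approach as the paper: both use $\mathcal{V}=\{x_2^a x_1^b\}$, identify $A/(x_3,\delta)$ with the Weyl algebra for (i), compute $\TIP(f\beta v)=\TIP(f)\cdot\TIP(\beta)\cdot v$ with $\TIP(\delta)=x_1x_2$ for (ii), and observe that the leading monomial of anything in the ideal must contain $x_3$ or the factor $x_1x_2$ for (iii). Your suffix-reading for (ii) and your rewriting/projection argument for (iii) are slightly more explicit than the paper's treatment (the paper's line ``$Av_1\cap Av_2=0$'' is loosely stated, and its (iii) glosses over possible cancellations), but the underlying ideas coincide.
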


\begin{proof}

To show that the pair $(x_{3}, \delta)$ satisfies the first requirement (i) of the unique order property, we just let $\mathcal{V}$ be the set
$
\mathcal{V} = \{ x_{2}^{i} x_{1}^{j} : i,j \geq 0 \}.
$. Then $A=\operatorname{Span}_{k}(\mathcal{V}) \oplus (x_{3},\delta)$.

To prove that it satisfies (ii) of the unique order property, we need to show that for any $v_{1}, v_{2} \in \mathcal{V}$, any $\beta_{1}, \beta_{2} \in \{x_{3},\delta\}$, and  nonzero $f_{1}, f_{2} \in A$, we have
$
    \mathrm{TIP}(f_{1}\beta_{1} v_{1}) \neq \mathrm{TIP}(f_{2}\beta_{2} v_{2})
    \quad\text{whenever}\quad
    (\beta_{1}, v_{1}) \neq (\beta_{2}, v_{2}).
$  We first deal with the case $v_{1} \neq v_{2}$. By the definition of $\mathcal{V}$, we have $A v_{1} \cap A v_{2} = 0$. So the claim is true. 
When $v_{1} = v_{2} = x_{2}^{i}x_{1}^{j}$, we may assume $\beta_{1} = x_{3}$ and $\beta_{2} = \delta$. Since
$$
f_{2} \delta x_{2}^{i} x_{1}^{j} = f_{2} (x_{1} x_{2} - x_{2} x_{1} - 1) x_{2}^{i} x_{1}^{j},
$$
we have 
\[
\TIP(f_{2} \delta v_{2}) = \TIP(f_{2} x_{1} x_{2} v_{2}) = \TIP(f_{2} x_{1} x_{2} v_{1}).
\]
Since $A(x_{1} x_{2} v_{1}) \cap A(x_{3} v_{1}) = 0$, we must have 
\[
\TIP(f_{2} x_{1} x_{2} v_{1}) \neq \TIP(f_{1} x_{3} v_{1}).
\]
Hence (ii) is proved.

To prove it satisfies (iii) of the unique order property, we need to show that   for every $f \in (x_{3},\delta)$ and every $v \in \mathcal{V}$, we have $\mathrm{TIP}(f) \neq v$. Let $f \in (x_{3}, \delta)$. First, if $f \not\in (\delta)$, then $\TIP(f) \geq x_{3} > v$ for every $v \in \mathcal{V}$. Hence we may assume $f \in (\delta)$. Then there exist monomials $a_{\ell}, b_{\ell}$ such that 
\[
f = \sum a_{\ell} (x_{1} x_{2} - x_{2} x_{1} - 1) b_{\ell}.
\]
So we have
\[
\TIP(f) = a_{k} x_{1} x_{2} b_{k} \quad \text{for some } k.
\]
It is not of the form $x_{2}^{i} x_{1}^{j}$, so the claim is proved.
\end{proof}

\begin{corollary}\label{corollary-acyclic-ideal}
    Let $A=k\langle x_{1},x_{2}, x_{3} \rangle$ be the finite-cell DG algebra where $|x_{1}|=|x_{2}|=0, |r|=-1$, and the differentials are
    $$
    dx_{1}=dx_{2} =0, dx_{3}=\delta=x_{1}x_{2} -x_{2}x_{1}-1.
    $$
    Then the 2-sided ideal $(x_{3}, \delta)$ is an acyclic DG ideal.
\end{corollary}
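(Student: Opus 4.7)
The plan is to deduce the corollary by a direct application of the technical Lemma~\ref{lemma-ideal-acyclic} with the pair $(r,\delta)=(x_3,\,x_1x_2-x_2x_1-1)$ and the degree order of Definition~\ref{definition-degree-order}. Unpacking that lemma, I would need to verify five things: (a) $A$ is a finite-cell DG algebra, (b) the degree order is a linear order on monomials preserved by left multiplication, (c) $\TIP(df)<\TIP(f)$ for every $f\in A$, (d) $dr=\delta$, and (e) the pair $(r,\delta)$ satisfies the unique order property.

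Conditions (a) and (d) are built into the hypothesis of the corollary. Condition (c) is precisely the content of Lemma~\ref{lemma-TIP}, and condition (e) is exactly what Lemma~\ref{lemma-unique-order} provides for this specific choice of $(x_3,\delta)$. So the only item not handed to us by a prior statement is (b), and I would prove it as a short self-contained step.

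For (b), I would first check $\TIP(v)\leq\TIP(uv)$ when $u,v$ are monomials: here $\deg(uv)=\deg(u)+\deg(v)$, and adding a nonnegative vector $\deg(u)\in\mathbb{Z}_{\geq 0}^n$ is monotone (and strictly monotone unless $u$ is a scalar) with respect to the right lexicographic order on $\mathbb{Z}_{\geq 0}^n$; if $u$ is a nonzero scalar, then $\TIP(uv)=\TIP(v)$ trivially. For the general case, I would expand $u=\sum c_i u_i$ and $v=\sum d_j v_j$ into monomials and note that the products $u_i v_j$ of distinct monomial factors remain distinct (left multiplication is injective on monomials in a free algebra). Together with the monomial case, this shows that the largest monomial in $uv$ is the product of the $<$-largest monomials of $u$ and $v$, whose degree dominates $\TIP(v)$.

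With (a)--(e) verified, Lemma~\ref{lemma-ideal-acyclic} applies and yields that the two-sided DG ideal $(x_3,\delta)$ is acyclic. I expect no real obstacle here: the corollary is essentially a bookkeeping combination of Lemmas~\ref{lemma-ideal-acyclic}, \ref{lemma-TIP}, and \ref{lemma-unique-order}, and the only genuinely new verification is the left-multiplication invariance of the degree order, which is immediate from the additivity of $\deg$ and the monotonicity of right lex under nonnegative shifts.
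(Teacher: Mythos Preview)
Your approach is exactly the paper's: invoke Lemma~\ref{lemma-ideal-acyclic} with the degree order, and feed in Lemmas~\ref{lemma-TIP} and~\ref{lemma-unique-order} for conditions (c) and (e). The paper handles (b) in one line (``by Definition~\ref{definition-degree-order}''), whereas you spell it out.

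One small slip in your verification of (b): it is \emph{not} true that the products $u_i v_j$ of distinct pairs of monomials are distinct in a free algebra; e.g.\ $x_1\cdot(x_1x_2)=(x_1x_1)\cdot x_2$. Injectivity of left multiplication only gives $u_iv_j\neq u_iv_{j'}$ for fixed $i$. What you actually need (and what is true) is that the degree order is multiplicative: if $u_1>u_i$ and $v_1>v_j$ then $u_1v_1>u_iv_j$. This follows because right lex on $\mathbb{Z}_{\ge0}^n$ is translation-invariant, so $\deg(u_1v_1)\succeq\deg(u_iv_j)$ with equality only if $\deg(u_i)=\deg(u_1)$ and $\deg(v_j)=\deg(v_1)$; in that equal-degree case all monomials have the same word length and the right-lex tiebreaker on words gives $u_1v_1\succ u_iv_j$ (compare the rightmost $|v_1|$ letters first). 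Hence the top monomial $u_1v_1$ survives in $uv$ and $\TIP(uv)=\TIP(u)\TIP(v)\ge\TIP(v)$. With this correction, your argument is complete.
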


\begin{proof}
By Definition \ref{definition-degree-order}, the degree order $<$ is preserved by left multiplication.
    By Lemma \ref{lemma-TIP} and Lemma \ref{lemma-unique-order}, the conditions of Lemma \ref{lemma-ideal-acyclic} hold.
\end{proof}

Note that the right lexicographic order (Definition \ref{definition-right-lexi}) is not suitable.
\begin{remark}\label{remark-degree-order-reason}
    Let $A$ be the finite-cell DG algebra in Corollary~\ref{corollary-acyclic-ideal}. The pair $(x_{3},\delta)$ does not satisfy the unique order property (Definition~\ref{definition-unique-order}) with respect to the right lexicographic order $\prec$ in Example \ref{example-naive} and the set $\mathcal{V} = \{ x_{2}^{i} x_{1}^{j} : i,j \geq 0\}$. Indeed, in this case we have 
    \[
        \TIP(x_{3} \cdot \delta \cdot 1)
        = \TIP(x_{3} x_{1} x_{2} - x_{3} x_{2} x_{1} - x_{3})
        = x_{3}
        = \TIP(1 \cdot x_{3} \cdot 1),
    \]
    so condition~(ii) of the unique order property is not satisfied.
\end{remark}

Now we prove the main theorem.

\begin{theorem}\label{theorem-main}
    Let $k$ be any  field of $\mathrm{char}(k)=0$. There is a finite dimensional DG $k$-algebra $B$ such that $\HoH_{*}(B)$ is finite dimensional, but $B$ is not smooth over $k$. In particular, there are only finitely many $n\in \ZZ$ such that $\HoH_{n}(B)\neq 0$.
\end{theorem}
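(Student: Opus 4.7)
The plan is to assemble the main theorem from the technical preparation above. The finite-cell DG algebra $A=k\langle x_1, x_2, x_3\rangle$ constructed in Corollary~\ref{corollary-acyclic-ideal} contains an acyclic two-sided DG ideal $(x_3,\delta)$, so the quotient map
\[
A \;\longrightarrow\; A/(x_3,\delta) \;=\; k\langle x_1,x_2\rangle/(x_1x_2 - x_2 x_1 - 1) \;=\; A_1
\]
is a quasi-isomorphism onto the first Weyl algebra $A_1$. In particular $\mathcal{P}\mathrm{erf}(A)\simeq \mathcal{P}\mathrm{erf}(A_1)$ as DG categories, while $A$ itself is smooth by Proposition~\ref{proposition-smooth-for-fc}. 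Thus $A$ furnishes a smooth categorical compactification of $\mathcal{P}\mathrm{erf}(A_1)$.

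Next, applying Proposition~\ref{fully faithful embedding} to $A$ yields a Morita equivalence $\mathcal{P}\mathrm{erf}(A)\simeq \mathcal{E}/\mathcal{S}$, where $\mathcal{E}$ is pretriangulated and proper with a full exceptional collection, and $\mathcal{S}$ is generated by a single compact object $E$. Setting $B_0 = \End_{\mathcal{S}}(E)$ gives $\mathcal{S}\simeq \mathcal{P}\mathrm{erf}(B_0)$, and by Orlov's theorem \cite{orlov2020finite} we may replace $B_0$ by a Morita equivalent \emph{finite-dimensional} DG algebra $B$; this is the algebra we shall exhibit. To see that $\HoH_*(B)$ is finite-dimensional, I would apply Keller's localization theorem for Hochschild homology to the exact sequence $\mathcal{S}\to\mathcal{E}\to\mathcal{P}\mathrm{erf}(A_1)$, obtaining the long exact sequence
\[
\cdots \to \HoH_n(B) \to \HoH_n(\mathcal{E}) \to \HoH_n(A_1) \to \HoH_{n-1}(B) \to \cdots.
\]
Since $\mathcal{E}$ admits a full exceptional collection of finite length, $\HoH_*(\mathcal{E})$ is concentrated in degree zero and finite-dimensional, and since $\mathrm{char}(k)=0$ the Hochschild homology $\HoH_*(A_1)$ of the Weyl algebra is classically finite-dimensional. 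The long exact sequence then forces $\HoH_*(B)$ to be finite-dimensional and supported in only finitely many degrees.

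Finally, I would argue by contradiction that $B$ is not smooth. If it were, then since $B$ is also proper (it is finite-dimensional), $\mathcal{P}\mathrm{erf}(B)$ would be a smooth proper DG category, hence dualizable. Its fully faithful embedding $\mathcal{S}\hookrightarrow\mathcal{E}$ into the smooth proper ambient $\mathcal{E}$ would then be admissible, yielding a semiorthogonal decomposition whose quotient factor $\mathcal{E}/\mathcal{S}\simeq\mathcal{P}\mathrm{erf}(A_1)$ inherits properness from $\mathcal{E}$. But $A_1$ is manifestly infinite-dimensional over $k$, a contradiction. The main obstacle will be this last transfer step --- verifying that a smooth proper full DG subcategory of a smooth proper DG category is admissible, with both factors of the induced semiorthogonal decomposition inheriting smoothness and properness --- which is standard in the DG-categorical literature but requires careful citation.
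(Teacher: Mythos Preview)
Your proposal is correct and follows essentially the same route as the paper's proof: quasi-isomorphism $A\simeq A_1$ via Corollary~\ref{corollary-acyclic-ideal}, Proposition~\ref{fully faithful embedding} to produce $\mathcal{E}/\mathcal{S}$, Orlov's result to obtain a finite-dimensional $B$, Keller's localization long exact sequence for the Hochschild computation, and the contradiction argument for non-smoothness. The paper resolves your ``main obstacle'' by citing \cite[Corollary~2.20]{orlov2020finite} directly for admissibility of a smooth proper subcategory, and cites \cite[Theorem~2.1]{farinati2003hochschild} for the precise values $\HoH_2(A_1)=k$, $\HoH_i(A_1)=0$ otherwise; with these in hand the argument is exactly as you outline.
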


\begin{proof}
Let $A$ be the finite-cell DG 
$k$-algebra in Corollary~\ref{corollary-acyclic-ideal}. Since $(x_{3}, x_{1} x_{2}-x_{2}x_{1}-1)$ is an acyclic DG ideal, $A$ is quasi-isomorphic to the DG algebra
\[
A/(x_{3}, x_{1} x_{2}-x_{2}x_{1}-1) \cong A_{1} := \bigl(k\langle x_{1},x_{2} \rangle/(x_{1}x_{2}-x_{2}x_{1}-1), d_{A_{1}}\bigr),
\]
where $|x_{1}| = |x_{2}| = 0$ and $d_{A_{1}}(x_{1}) = d_{A_{1}}(x_{2}) = 0$.
Note that $A_{1}$ is the classical Weyl algebra.
    Hence by \cite[Theorem 2.1]{farinati2003hochschild}, we have 
    \begin{equation}\label{equation-HH-Weyl}
    \HoH_{i}(\cP\mathrm{erf}
    (A))\cong
\operatorname{HH}_i(A_1) =
\begin{cases}
k & \text{if } i = 2,\\
0 & \text{if } i \neq 2.
\end{cases}
\end{equation}

 Since $A$ is a finite-cell DG algebra, by Proposition~\ref{fully faithful embedding} there is a proper pretriangulated DG category $\cC$ that admits a full exceptional collection 
\begin{equation}\label{equation-SOD}
    \cC = \langle E_{1}, \dots, E_{m} \rangle,
\end{equation}
such that $\mathcal{P}\mathrm{erf}(A)$ is a quotient of $\cC$ by a full idempotent complete pretriangulated DG subcategory generated by a single object $S$:
$
    \mathcal{P}\mathrm{erf}(A) \cong \cC / \langle S \rangle.
$
By \cite[Corollary~2.20]{orlov2020finite}, the subcategory $\langle S \rangle$ is quasi-equivalent to $\mathcal{P}\mathrm{erf}(B)$ for a finite-dimensional DG algebra $B$.

We claim that $B$ is not smooth. Assume that $B$ is a smooth DG algebra. Since $B$ is finite-dimensional, it is automatically proper. By \cite[Corollary~2.20]{orlov2020finite}, the inclusion
\[
\mathcal{P}\mathrm{erf}(B) = \langle S \rangle \subset \cC
\]
is then admissible. Hence
$
\cC / \langle S \rangle = \mathcal{P}\mathrm{erf}(A)
$
is an admissible subcategory of $\cC$. In particular, $\mathcal{P}\mathrm{erf}(A)$ is proper, and thus $A$ is a proper DG algebra. Hence $H^{*}(A)$ is finite-dimensional. However, $A$ is quasi-isomorphic to $A_{1}$, so
\[
H^{*}(A) = H^{*}(A_{1}) = H^{0}(A_{1}) = A_{1},
\]
which is infinite-dimensional, a contradiction. Hence $B$ is not smooth.

    By \cite[Theorem 3.1]{keller1998invariance}, $\HoH_{*}(-)$ has a long exact sequence 
    with respect to localization: 
    $$\HoH_{*}(\mathcal{P}\mathrm{erf}(B)) \to \HoH_{*}(\cC) \to \HoH_{*}(\mathcal{P}\mathrm{erf}(A)) \to \HoH_{*-1}(\mathcal{P}\mathrm{erf}(B)).$$
  By (\ref{equation-SOD}), we have $$\operatorname{HH}_i(\cC) =
\begin{cases}
k^{\oplus m} & \text{if } i = 0,\\
0 & \text{if } i \neq 0.
\end{cases}$$
So by (\ref{equation-HH-Weyl}), we have

$$\operatorname{HH}_i(\mathcal{P}\mathrm{erf}(B))=
\begin{cases}
k^{\oplus m} & \text{if } i = 0,\\
k & \text{if } i =1,\\
0 & \text{if } i\neq 0,1.
\end{cases}$$
In particular,  $\HoH_{*}(B)\cong \HoH_{*}(\mathcal{P}\mathrm{erf}(B))$ is finite dimensional. 
\end{proof}

\subsection{Another smooth categorical compactification}\label{subsection-geometric-construction}

The construction in this subsection is due to Alexander Efimov. 
In Theorem \ref{theorem-main}, we show that the Weyl algebra $A_{1}$ admits a smooth categorical compactification by proving that $A_{1}$ is a finite-cell DG algebra. Efimov provides another smooth compactification using $\mathscr{D}$-modules. We illustrate his idea as follows.

Let $F\Db(\mathscr{D}_{\mathbb{P}^{1}})$ be the bounded filtered derived category of  coherent $\mathscr{D}_{\mathbb{P}^{1}}$-modules equipped with a good filtration. An object $(\mathcal{M}^{\bullet}, F_{\bullet})\in F\Db(\mathscr{D}_{\mathbb{P}^{1}})$ consists of $\cM^{\bullet}\in \Db(\mathscr{D}_{\mathbb{P}^{1}})$ and a good filtration $F_{\bullet}$ on $\cM^{\bullet}$.
Let $\cC \subset F\Db(\mathscr{D}_{\mathbb{P}^{1}})$ be the full triangulated subcategory generated by objects $(\mathcal{M}^{\bullet}, F_{\bullet})$ such that the complex $F_{j}\mathcal{M}^{\bullet}$ is acyclic for $j\gg 0$.  It can be shown that the quotient category
$
\mathcal{D} := F\Db(\mathscr{D}_{\mathbb{P}^{1}})/\cC
$
is equivalent to the derived category of a noncommutative $\mathbb{P}^{1}$-bundle over $\mathbb{P}^{1}$ (see \cite{ben2008perverse} for a description of this noncommutative $\mathbb{P}^{1}$-bundle).
In particular, $\mathcal{D}$ is smooth and proper, and it admits a full exceptional collection of length $4$.

We have the forgetful functor
$$
G \colon F\Db(\mathscr{D}_{\mathbb{P}^{1}}) \to \Db(\mathscr{D}_{\mathbb{P}^{1}}), 
\quad (\mathcal{M}^{\bullet}, F_{\bullet}) \mapsto \mathcal{M}^{\bullet}.
$$
For every $(\mathcal{M}^{\bullet}, F_{\bullet})\in \cC\subset F\Db(\mathscr{D}_{\mathbb{P}^{1}})$, it can be checked that $G((\mathcal{M}^{\bullet}, F_{\bullet}))=\mathcal{M}^{\bullet}$ is acyclic. So $G$ induces a functor
$
G' \colon F\Db(\mathscr{D}_{\mathbb{P}^{1}})/\cC=\mathcal{D} \too \Db(\mathscr{D}_{\mathbb{P}^{1}}).
$
Note that we have an open embedding $i \colon \mathbb{A}^{1} \hookrightarrow \mathbb{P}^{1}$, which induces a functor
$
i^{*} \colon \Db(\mathscr{D}_{\mathbb{P}^{1}}) \to \Db(\mathscr{D}_{\mathbb{A}^{1}}) = \mathrm{Perf}(A_{1}).
$
Thus we have the functor
\[
T = i^{*} \circ G' \colon \mathcal{D} \to \mathrm{Perf}(A_{1}).
\]
Clearly $T$ is a quotient functor. Moreover it can be shown that the kernel of $T$ is generated by a single object. Therefore, $\mathcal{D}$ is a smooth categorical compactification of $\mathrm{Perf}(A_{1})$.

\printbibliography

\end{document}